\newtheorem{theorem}{Theorem}[section]
\newtheorem{definition}[theorem]{Definition}
\newtheorem{lemma}[theorem]{Lemma}
\newenvironment{proof}[1][Proof]{\textbf{#1.} }{\ \rule{0.5em}{0.5em}}
\def\be{{\beta}}
\def\be{{\beta}}
\def\ga{{\gamma}}
\def\vare{{\varepsilon}}
\def \eref#1{\hbox{(\ref{#1})}}
\def\th{{\theta}}
\def \eref#1{\hbox{(\ref{#1})}}
\def\th{{\theta}}
\begin{document}
\title{The $\frac 43$-variation of the derivative of the self-intersection Brownian local time and related processes}

\author{Yaozhong Hu\thanks{Y.  Hu is
partially supported by a grant from the Simons Foundation
\#209206.}, \  David Nualart\thanks{ D. Nualart is supported by the
NSF grant DMS0904538. \newline
  Keywords:  $\beta$-variation, self-intersection local time,
  derivative of self-intersection local time,
 fractional martingale.       }\, and Jian Song   \\
  }
\date{}
\maketitle

\begin{abstract}
In this paper we compute the $\frac 43$-variation of the derivative of the self-intersection  Brownian local time $\gamma_t=\int_0^t \int_0^u \delta '(B_u-B_s)dsdu\,, t\ge 0$, applying techniques
 from  the theory of  fractional martingales
\cite{hns}.
\end{abstract}

\section{Introduction}
Let $B=\{B_t, t\ge 0\} $ be a standard one-dimensional Brownian motion. In this paper we are interested in the process
$\gamma=\{\gamma_t, t\ge 0\}$   formally given  by
\[
\gamma_t=-\frac d{dy} \alpha_t(y)|_{y=0}\,,
\quad {\rm where}\quad
\alpha_t(y)=  \int_0^t \int_0^u \delta_y(B_u-B_s)dsdu  \,.
\]
It can be rigorously defined
as the following limit in $L^2(\Omega)$
\begin{equation} \label{t1}
\gamma_t= \lim_{\epsilon \rightarrow 0} \int_0^t \int_0^u p'_\epsilon(B_u-B_s)dsdu,
\end{equation}
where $p_\epsilon(x)= (2\pi \epsilon) ^{-\frac 12} \exp( -x^2/ (2\epsilon))$.        This process has been studied by Rogers and Walsh in \cite{rw} and by Rosen in \cite{r}.

Let us recall the definition of the $\beta$-variation of a stochastic processes   from \cite{hns}.
\begin{definition}
 Let  $\be \ge 1$ and let  $X=\{X_{t},t\geq 0\}$ be a continuous stochastic process. Denote
 \begin{equation}
S_{\beta ,n}^{[a,b]}(X):=\sum_{i=0}^{n-1}|X_{t_{i+1}^{n}}-X_{t_{i }^{n}}|^{\beta }\,,
\label{e.2.3}
\end{equation}%
 where  $t_{i}^{n}=a+\dfrac{i}{n}(b-a)$ for $i=0,\dots ,n$.  If  the limit of
$ S_{\beta ,n}^{[a,b]}(X)$ exists in  probability  as $n$ tends to infinity, then we say
 that the \ $\beta $-variation of $X$  exists   on the interval $[a,b]$ and the limit is  denoted by $%
\langle X\rangle _{\beta ,[a,b]}$.   We
say that the \ $\beta $-variations  of $X$ on $[a,b]$ exists in $L^{p}$  if
the limit of
$ S_{\beta ,n}^{[a,b]}(X)$  exists in $L^{p}(\Omega)$, where $p\ge 1$.
\end{definition}
For any $a<b<c$, if the $\beta$ variation of $X$ exist on the intervals $[a,b]$ and $[b,c]$, then it also exists on $[a,c]$ and
\[
\langle X \rangle_{\beta,[a,c]}=\langle X \rangle_{\beta,[a,b]}+\langle X \rangle_{\beta,[b,c]}.
\]
Denote by $\{L^x_t, t\ge0, x\in \mathbb{R}\}$ a jointly continuous version of the Brownian local time.
In the paper \cite{rw}  Rogers and Walsh gave  an explicit formula for the exact $\frac 43$-variation of the process $\gamma$, using Gebelein's inequality for Gaussian random variables to bound  the sums of powers of the increments of process
$\gamma$.  More precisely, they  proved the following theorem.
\begin{theorem} \label{th1}
The process  $\gamma$ has a finite $\frac 43$-variation in $L^2$ on any interval $[0,T]$ given by
\[
\langle \gamma\rangle_{\frac 43, [0,T]}= K \int_0^T\left(L^{B_r}_r \right)^\frac{2}{3}dr,
\]
where $K=E|B_1|^\frac{4}{3}E\left[\int_\mathbb{R}(L^z_1)^2dz\right]^\frac{2}{3}$.
\end{theorem}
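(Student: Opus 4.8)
The plan is to show that, on each short time interval, $\gamma$ behaves like an increment of a fractional martingale in the sense of \cite{hns}, carrying a random diffusion coefficient governed by the Brownian local time $L^{B_r}_r$ at the current position, and then to assemble these local contributions by a conditional law of large numbers; the additivity of the $\beta$-variation recalled above makes this local point of view natural. Throughout fix $T>0$, the partition points $t_i=t_i^n=iT/n$, $i=0,\dots,n$, and the Brownian filtration $(\cF_t)$.

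\textbf{Step 1 (splitting and discarding $N_i$).} For each $i$ write $\gamma_{t_{i+1}}-\gamma_{t_i}=C_i+N_i$, where, both understood in the $L^2$-limit sense of \eref{t1},
\[
C_i=\int_{t_i}^{t_{i+1}}\!\!\int_0^{t_i}\delta'(B_u-B_s)\,ds\,du,\qquad
N_i=\int_{t_i}^{t_{i+1}}\!\!\int_{t_i}^u\delta'(B_u-B_s)\,ds\,du .
\]
By Brownian scaling $N_i$ is distributed as $(t_{i+1}-t_i)$ times a copy of $\gamma_1$ built from a fresh Brownian motion, so, using the integrability of $\gamma_1$, the sum $\sum_i|N_i|^{4/3}$ is of order $n\cdot n^{-4/3}$ in $L^2(\Omega)$; since $\big||a+b|^{4/3}-|a|^{4/3}\big|\le c\,|b|\,(|a|\vee|b|)^{1/3}$, summing over $i$ and using Cauchy--Schwarz gives
\[
S_{\frac43,n}^{[0,T]}(\gamma)=\sum_{i=0}^{n-1}|C_i|^{4/3}+\rho_n,\qquad \rho_n\longrightarrow 0\ \text{ in }L^2(\Omega).
\]

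\textbf{Step 2 (the cross term as a fractional-martingale increment).} By the occupation-time formula $\int_0^{t_i}\delta'(B_u-B_s)\,ds=\big(\partial_x L^x_{t_i}\big)\big|_{x=B_u}$, and applying that formula once more to the increment process $(B_u)_{u\in[t_i,t_{i+1}]}$, whose local time at level $B_{t_i}+a$ we write $\ell^a$, we obtain
\[
C_i=\int_{t_i}^{t_{i+1}}\big(\partial_x L^x_{t_i}\big)\big|_{x=B_u}\,du=\int_{\mathbb R}\big(\partial_a L^{B_{t_i}+a}_{t_i}\big)\,\ell^a\,da ,
\]
in which $a\mapsto\partial_a L^{B_{t_i}+a}_{t_i}$ is $\cF_{t_i}$-measurable and $\ell^\cdot$ is independent of $\cF_{t_i}$. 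The point at which the techniques of \cite{hns} enter is a local limit theorem for Brownian local time: on the relevant vanishing spatial scale $a=O\big((t_{i+1}-t_i)^{1/2}\big)$ the field $a\mapsto L^{B_{t_i}+a}_{t_i}$ has asymptotically Gaussian increments whose local variance is governed by $L^{B_{t_i}}_{t_i}$, so that, conditionally on $\cF_{t_i}$ and on the fresh path, $C_i$ is an increment of a fractional martingale with diffusion coefficient a multiple of $\big(L^{B_{t_i}}_{t_i}\big)^{1/2}$ and with $\frac43$-variation governed by the formula of \cite{hns}. Combining this with the scaling identity $\int_{\mathbb R}(\ell^a)^2\,da\stackrel{d}{=}(t_{i+1}-t_i)^{3/2}\int_{\mathbb R}(L^z_1)^2\,dz$ yields, uniformly in $i$,
\[
E\big[\,|C_i|^{4/3}\,\big|\,\cF_{t_i}\,\big]=K\,\big(L^{B_{t_i}}_{t_i}\big)^{2/3}(t_{i+1}-t_i)\,\big(1+o(1)\big),\qquad
E\big[\,|C_i|^{8/3}\,\big|\,\cF_{t_i}\,\big]\le c\,(t_{i+1}-t_i)^{2}\big(1+L^{B_{t_i}}_{t_i}\big)^{4/3},
\]
with $K$ as in the statement: the factor $E|B_1|^{4/3}$ is the Gaussian one, and $E\big[\int_{\mathbb R}(L^z_1)^2dz\big]^{2/3}$ comes from the self-intersection local time over $[0,1]$ of the fresh increment (cf.\ $\alpha_t(0)$).

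\textbf{Step 3 (assembling the limit and the main obstacle).} Write $\sum_i|C_i|^{4/3}=\sum_i E[|C_i|^{4/3}\mid\cF_{t_i}]+\sum_i D_i$ with $D_i:=|C_i|^{4/3}-E[|C_i|^{4/3}\mid\cF_{t_i}]$. Each $D_i$ is $\cF_{t_{i+1}}$-measurable with $E[D_i\mid\cF_{t_i}]=0$, hence the $D_i$ are orthogonal, and the second moment bound of Step 2 gives $E\big[(\sum_i D_i)^2\big]=\sum_i E[D_i^2]\le c\,n\cdot n^{-2}\to0$. For the first sum, Step 2 gives $\sum_i E[|C_i|^{4/3}\mid\cF_{t_i}]=K\sum_i\big(L^{B_{t_i}}_{t_i}\big)^{2/3}(t_{i+1}-t_i)+o(1)$, a Riemann sum converging in $L^2(\Omega)$ to $K\int_0^T\big(L^{B_r}_r\big)^{2/3}dr$ since $r\mapsto L^{B_r}_r$ is a.s.\ continuous on $[0,T]$ (joint continuity of Brownian local time) with moments of all orders. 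Together with Step 1 this gives $S_{\frac43,n}^{[0,T]}(\gamma)\to K\int_0^T\big(L^{B_r}_r\big)^{2/3}dr$ in $L^2(\Omega)$, which is the assertion. The main obstacle is Step 2: one must give rigorous meaning to the pointwise-in-$x$ object $\partial_x L^x_{t_i}\big|_{x=B_u}$ integrated against the fresh local time, identify $C_i$ with a fractional-martingale increment, and --- the truly delicate point --- control the error $o(1)$ in the conditional moment expansion uniformly over the partition; this is precisely what the fractional-martingale machinery of \cite{hns} is meant to supply, while the negligibility of $N_i$, the orthogonality argument, and the Riemann-sum limit are routine once the moment bounds are available.
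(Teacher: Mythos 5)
Your proposal takes a genuinely different route from the paper's. The paper starts from the Clark--Ocone representation \eqref{gammat}, freezes the time variable of the local time on a coarse grid, uses Perkins' semimartingale decomposition of $L^\cdot_{\tau_N}$, and finally reduces the problem to the $\frac 43$-variation of the explicit stochastic-integral process $X_t=\int_0^t E^\theta W_{\theta\sqrt{t-r}}\,dB_r$, which is computed in Theorem~\ref{th2} by self-similarity plus the ergodic theorem. You instead work directly with the double-integral form $\gamma_t=\int_0^t\int_0^u\delta'(B_u-B_s)\,ds\,du$, split each increment into a ``cross'' piece $C_i$ and a ``diagonal'' piece $N_i$, condition on $\cF_{t_i}$, and assemble by a Riemann sum plus a martingale-orthogonality argument. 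This is closer in spirit to the original Rogers--Walsh route (the $C_i/N_i$ split is theirs), and in principle would give the $L^2$-variation with the Rogers--Walsh constant in one pass, whereas the paper gets $L^1$-variation with a different-looking expression for $K$.

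There is, however, a genuine gap precisely at the point you flag, and the acknowledgement does not close it. Your Step 2 rests on two unproved assertions. First, you write $\int_0^{t_i}\delta'(B_u-B_s)\,ds=\bigl(\partial_xL^x_{t_i}\bigr)\big|_{x=B_u}$ and then $C_i=\int_{\mathbb R}\bigl(\partial_aL^{B_{t_i}+a}_{t_i}\bigr)\ell^a\,da$; but Brownian local time is only H\"older$(\tfrac12{-})$ in the space variable, so $\partial_xL^x_{t_i}$ does not exist pointwise, and this integral has no classical meaning. To make it rigorous one must replace $\partial_aL^{B_{t_i}+a}_{t_i}\,da$ by the Perkins decomposition $dL^z_{t_i}=2\sqrt{L^z_{t_i}}\,dW_z+\alpha(z)\,dz$ and show that the drift part $\alpha$ contributes nothing to the $\frac43$-variation; this is precisely the content of the paper's decomposition $\gamma^{N,1}=\gamma^{N,2}+\gamma^{N,3}+\gamma^{N,4}$ in Step~3 and the accompanying estimates \eqref{eq1}, \eqref{eq2}, which are not short. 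Second, the central claim
\[
E\bigl[|C_i|^{4/3}\,\big|\,\cF_{t_i}\bigr]=K\bigl(L^{B_{t_i}}_{t_i}\bigr)^{2/3}(t_{i+1}-t_i)\bigl(1+o(1)\bigr)\quad\text{uniformly in }i,
\]
together with the second-moment bound $E\bigl[|C_i|^{8/3}\,\big|\,\cF_{t_i}\bigr]\le c\,(t_{i+1}-t_i)^2(1+L^{B_{t_i}}_{t_i})^{4/3}$, is asserted, not derived. Establishing this asymptotic with uniform control of the error is the entire content of the paper's Steps 1--3 plus Theorem~\ref{th2}, involving Tanaka's formula, the estimates of Lemmas~\ref{A1} and \ref{A2}, and the ergodic-theorem computation of the constant. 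Until these two points are proved, the proposal is a plausible strategy but not a proof. (Two smaller issues: replacing $\sqrt{L^z_{\tau_N}}$ by its value at $z=B_{t_i}$ needs a quantitative H\"older argument like the one used for $\gamma^{N,3}$; and the $L^2$-convergence of $\rho_n$ in Step~1 requires a Cauchy--Schwarz argument involving moment bounds on $\sum_i|C_i|^{2/3}$, which again depends on the unproved estimates above.)
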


The purpose of the present paper is to provide an alternative and simpler   proof of Theorem \ref{th1} by using the  methodology introduced by Hu, Nualart and Song in \cite{hns} to compute the $p$-variation of a fractional martingale.  A basic ingredient
in our approach is the stochastic
integral representation of $\gamma_t$ obtained by Hu and Nualart  in  \cite{hn}  through the  Clark-Ocone formula:
\begin{equation}
 \gamma_t= \int_0^t  \left( \int_{\mathbb{R}}
 p_{t-r}(y)  \left( L_r^{y+ B_r  }- L_r^{B_r} \right) dy
 \right)dB_r.\label{gammat}
 \end{equation}
The main idea of the proof is as follows. By an approximation argument, and using the representation of the local time as a semimartingale in the space variable (see Perkins \cite{perkins}), the problem is reduced to the computation of the $\frac 43$-variation of the process
\begin{equation} \label{r1}
X_t= \int_0^t   \left(\int_{\mathbb{R}}  p_{t-r}(y) W_{y} dy \right) dB_r,
\end{equation}
where $W=\{W_y, y\in \mathbb{R}\}$ is a two-sided Brownian motion independent of $B$. Taking into account that $W$ is H\"older continuous of order  almost $\frac 12$, the integral $\int_{\mathbb{R}}  p_{t-r}(y) W_{y} dy$  behaves as $(t-r)^{\frac 14}$ as $r\uparrow t$. In this sense,  the variation of the process $X$ is similar to the variation of the fractional Brownian motion with Hurst parameter $H=\frac 34$. Actually, we can compute easily the $\frac 43$-variation of the process $X$ applying the approach
used for the fractional Brownian motion, based on the decomposition by Mandelbrot and Van Ness \cite{mnv} and the ergodic theorem. Notice, however, that our proof shows only the existence of the $\frac 43$-variation in $L^1$, and we obtain a different expression for the constant $K$ in Theorem \ref{th1}.

The paper is organized as follows. In the next section we derive the $\frac 43$-variation of the process $X$ given in (\ref{r1}) using ergodic theorem. Section 3 is devoted to the proof of Theorem \ref{th1},  where the $\frac 43$-variation is considered in $L^1$. Finally, the appendix contains some technical lemmas.  Along the paper we denote by $C$ a generic constant which may be different from line to line.

\setcounter{equation}{0}
\section{$\frac 43$-variation of a fractional-type process}
Consider the  stochastic process introduced in  (\ref{r1}). This process can also be expressed as
\[
X_t= \int_0^t   E^\theta W_{\theta \sqrt{t-r}}dB_r,
\]
where $\theta$ is a $N(0,1)$ random variable, independent of $B$, and $E^\theta$ denotes the expectation with respect to $\theta$. The following theorem is the main result of this section.
\begin{theorem} \label{th2}
The process $X=\{X_t, t\ge 0\}$ defined in (\ref{r1}) has a finite $\frac 43$-variation in $L^1$ given by
\[
\langle X \rangle_{\frac 43, [a,b]} = K(b-a),
\]
where
\begin{equation} \label{r2}
K= E(|\theta|^{\frac 43}) E
\left| \frac 14 \int_0^\infty \int_0^\infty (x+y) ^{-\frac 32}
(B_{1+x} -B_x) (B_{1+y}- B_y) dxdy \right| ^{\frac 23}.
\end{equation}
\end{theorem}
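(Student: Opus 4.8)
The plan is to adapt the Mandelbrot–Van Ness representation strategy for the fractional Brownian motion with $H=\frac34$ to the process $X$. The first step is to rewrite the increment $X_{t_{i+1}^n}-X_{t_i^n}$ in a form amenable to the ergodic theorem. Using the representation $X_t=\int_0^t E^\theta W_{\theta\sqrt{t-r}}\,dB_r$ and the independence of $W$ (equivalently $\theta$) from $B$, I would condition on $B$ and write the increment as an Itô integral with respect to $B$ of a kernel that, after the substitution $r=t_i^n+(t_{i+1}^n-t_i^n)s$ and a Brownian scaling of both $B$ and $W$, decouples into a factor $(b-a)^{3/4}n^{-3/4}$ (the correct normalization for a $\frac43$-variation with $H=\frac34$) times a stationary-in-$i$ object. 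Concretely, $\int_{\mathbb R}p_{t-r}(y)W_y\,dy=\frac14\int_{\mathbb R}(t-r)^{-3/2}|y|\,\text{(stuff)}$ after integrating by parts, and the key is that the Gaussian field $(B,W)$, suitably re-centered at each node $t_i^n$, generates a measure-preserving shift.

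The second step is to identify the limit. After the rescaling, $n^{3/4}(X_{t_{i+1}^n}-X_{t_i^n})$ should converge (jointly in the appropriate sense) to $(b-a)^{3/4}\,|\theta|\,\xi_i$, where $\xi_i$ are identically distributed random variables with $\xi_0\overset{d}{=}\frac14\int_0^\infty\int_0^\infty(x+y)^{-3/2}(B_{1+x}-B_x)(B_{1+y}-B_y)\,dx\,dy$ — note this double integral is exactly the (conditional, given one unit of $B$-increment) variance-type quantity that emerges from squaring the Itô integral and sending $n\to\infty$. Then
\[
S_{\frac43,n}^{[a,b]}(X)=\sum_{i=0}^{n-1}|X_{t_{i+1}^n}-X_{t_i^n}|^{4/3}=(b-a)\cdot\frac1n\sum_{i=0}^{n-1}|\theta_i|^{4/3}|\xi_i|^{4/3}+o(1),
\]
and the ergodic theorem applied to the stationary sequence $\{(\theta_i,\xi_i)\}$ (the shift on the underlying Gaussian space is mixing, hence ergodic) gives convergence to $(b-a)\,E(|\theta|^{4/3})\,E(|\xi_0|^{4/3})$, which is $K(b-a)$ with $K$ as in \eqref{r2}. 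The independence of $\theta$ from $\xi_0$ splits the expectation.

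For the $L^1$ convergence claimed in the theorem (as opposed to mere convergence in probability), I would establish uniform integrability of $S_{\frac43,n}^{[a,b]}(X)$, e.g. by bounding $\sup_n E\big[(S_{\frac43,n}^{[a,b]}(X))^{1+\delta}\big]$ for some small $\delta>0$; this uses Gaussian hypercontractivity together with the fact that the increments live in a fixed finite Wiener chaos (second chaos, essentially), so all $L^p$ norms are comparable and one only needs to control $\sum_i \|X_{t_{i+1}^n}-X_{t_i^n}\|_{L^2}^{4/3}\asymp n\cdot(n^{-3/4})^{4/3}=O(1)$ plus the cross-terms, the latter handled via the covariance decay of the stationary sequence. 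The main obstacle I anticipate is the first step: carefully performing the double scaling so that the residual kernel is genuinely stationary in $i$ and showing the error terms (coming from the finite lower limit $0$ in the integral $\int_0^t$, versus the infinite Mandelbrot–Van Ness window $\int_0^\infty$) vanish after normalization — this is the analogue of the "short-memory correction" in the fBm computation and requires an estimate showing the contribution of the far past is negligible in $L^{4/3}$ after summation. Once the stationary structure is in place, the ergodic theorem and the chaos-based uniform integrability are routine.
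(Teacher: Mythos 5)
Your high-level plan matches the paper's: replace $X$ by its Mandelbrot--Van Ness two-sided analogue $Y$, which has stationary increments and is $\frac34$-self-similar, show $\langle Y-X\rangle_{\frac43}=0$, and apply the ergodic theorem to $\frac1n\sum_i|Y_{i+1}-Y_i|^{\frac43}$. But two decisive details, as you have written them, are wrong, and the second is precisely where the paper's Step~2 has to do real work. First, your constant does not agree with (\ref{r2}). You write the rescaled increment as $(b-a)^{3/4}\,|\theta|\,\xi_i$ with $\xi_0 \overset{d}{=} \frac14\int_0^\infty\int_0^\infty(x+y)^{-\frac32}(B_{1+x}-B_x)(B_{1+y}-B_y)\,dx\,dy$ and conclude $K=E(|\theta|^{\frac43})\,E(|\xi_0|^{\frac43})$. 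But, as you yourself note, that double integral is the conditional \emph{variance} of the increment given $B$; given $B$, the increment of $Y$ is centered Gaussian with variance $\xi_i$, so it equals $\sqrt{\xi_i}\,\theta_i$, not $|\theta|\,\xi_i$. Hence $|Y_{i+1}-Y_i|^{\frac43}=|\theta_i|^{\frac43}\,\xi_i^{\frac23}$ and the correct constant is $K=E(|\theta|^{\frac43})\,E|\xi_0|^{\frac23}$ --- the exponent $\frac23$, not $\frac43$, which is what (\ref{r2}) shows.

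Second, the assertion that "the shift on the underlying Gaussian space is mixing, hence ergodic" is false as it applies here. After recentering, the sequence $\{\xi_i\}$ is obtained by shifting $B$ \emph{alone}: the field $W$ enters every $\xi_i$ through the same functional and is not shifted. The measure-preserving transformation on the product space therefore fixes the $W$-coordinate and is not ergodic; its invariant $\sigma$-field is (mod null) $\sigma(W)$. Birkhoff's theorem thus gives $\frac1n\sum_i|\xi_i|^{\frac43}\to Z=E(|\xi_1|^{\frac43}\mid\mathcal I)$, which is a priori a nondegenerate $W$-measurable random variable, not a constant. The paper's Step~2 resolves this by proving $Z$ is constant: it computes $E^W Z$ (which also identifies $K$) and then shows $E^B Z$ is constant via a \emph{second} application of the ergodic theorem, this time to the $W$-stationary, $W$-ergodic sequence $E^B|\xi_i|^{\frac43}$. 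Your proposal contains no such argument, and without it the proof does not close. (A minor remark: Birkhoff already delivers $L^1$ convergence once the summand is integrable, so the hypercontractivity/uniform-integrability layer you propose is superfluous; what does require an estimate is $\langle Y-X\rangle_{\frac43,[0,T]}=0$ in $L^1$, which the paper obtains by a direct Burkholder--Davis--Gundy bound on the far-past contribution.)
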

\begin{proof}
The proof will be done in two steps. To simplify the presentation we assume that $[a,b]=[0,T]$.

\medskip\noindent\textbf{Step  1}
Enlarging the probability space if necessary, we assume that $B=\{B_t, t\in \mathbb{R}\}$ is a two-sided Brownian motion.  Then we define
\[
Y_t=\int_{-\infty}^t E^\theta W_{  \theta\sqrt{t-r}}dB_r-\int_{-\infty}^0 E^\theta W_{  \theta\sqrt{-r}}dB_r.
\]
This process is well defined because, using  the fact that $E(W_xW_y)=\dfrac 12(|x|+|y|-|x-y|)$,  we can write
\begin{eqnarray*}
 E(Y_t^2)&=& E^W \int_\mathbb R \left(E^\theta W_{ \theta\sqrt{(t-r)^+}}-E^\theta W_{ \theta\sqrt{(-r)^+}}\right)^2dr\\
&=&\int_\mathbb R E^{\theta,\eta} E^W\left([ W_{ \theta\sqrt{(t-r)^+}}- W_{ \theta\sqrt{(-r)^+}}][ W_{ \eta\sqrt{(t-r)^+}}- W_{ \eta\sqrt{(-r)^+}}]\right)dr\\
&=&\frac{\sqrt2}{2}E(|\theta|)\int_\mathbb R \left(\sqrt{2[(t-r)^++(-r)^+]}-\sqrt{(t-r)^+}-\sqrt{(-r)^+} \right)dr\\
&=& \frac 1 {\sqrt{\pi}} \left(\int_0^\infty \left(\sqrt{2t+4r}-\sqrt{t+r}-\sqrt{r} \right)dr\right.\\
&&+ \left.\int_0^t \left(\sqrt{2(t-r)}-\sqrt{t-r}\right)dr\right) <\infty.
\end{eqnarray*}
We claim  that the difference
\begin{equation}\label{e.2.2}
Y_t-X_t= \int_{-\infty}^0 \left(E^\theta W_{  \theta\sqrt{t-r}} -E^\theta W_{  \theta\sqrt{-r}}\right)dB_r
\end{equation}
has $\frac 43$-variation in $L^1$  equal to zero
in any time interval $[0,T]$.  In fact, if $t_i=\frac{iT}n$,  then from the Burkholder-Davis-Gundy inequality and the Jensen
inequality,  and using the notation (\ref{e.2.3}),  we have
\begin{eqnarray*}
E S^{[0,T]}_{\frac 43, n} (Y-X)&=& \sum_{i=0}^{n-1}  E\left| \int_{-\infty}^0 \left( E^\theta W_{\theta \sqrt{t_{i+1} -r}}
-E^\theta W_{\theta \sqrt{t_{i} -r}} \right) dB_r \right| ^{\frac 43} \\
& \le & C \sum_{i=0}^{n-1}  E\left( \int_{-\infty}^0 \left( E^\theta W_{\theta \sqrt{t_{i+1} -r}}
-E^\theta W_{\theta \sqrt{t_{i} -r}} \right) ^2 dr \right) ^{\frac 23} \\
& \le & C \sum_{i=0}^{n-1}  \left( \int_{-\infty}^0 E \left( E^\theta W_{\theta \sqrt{t_{i+1} -r}}
-E^\theta W_{\theta \sqrt{t_{i} -r}} \right) ^2 dr \right) ^{\frac 23}.
\end{eqnarray*}
By the same computations as above we obtain
\begin{eqnarray*}
E S^{[0,T]}_{\frac 43, n} (Y-X)&\le& C
 \sum_{i=0}^{n-1}  \left( \int_0^\infty \left( \sqrt{2t_{i+1} +2t_i +4r} -\sqrt{t_{i+1}+r}
 -\sqrt{ t_i+r} \right) dr \right) ^{\frac 23} \\
 &=& C
 \sum_{i=0}^{n-1}  \left( \int_0 ^\infty \int_0^{\frac { t_{i+1} -t_i} 2}\int_0^{\frac { t_{i+1} -t_i} 2}
 (x+y+t_i+r) ^{-\frac 32} dxdydr \right) ^{\frac 23}\\
 &=& C
 \sum_{i=0}^{n-1}  \left( \int_0^{\frac { t_{i+1} -t_i} 2}\int_0^{\frac { t_{i+1} -t_i} 2}
 (x+y+t_i) ^{-\frac 12} dxdy \right) ^{\frac 23}.
\end{eqnarray*}
For $i\ge1$ we use the estimate $ (x+y+t_i) ^{-\frac 12} \le t_i^{-\frac 12}$. In this way we can estimate the above sum   for $i\ge 1$  by
\[
n^{-\frac 43} \sum_{i=1}^{n-1} \left(\frac in\right)^{-\frac 13} =\frac 1n \sum_{i=1}^{n-1} i^{-\frac 13},
\]
which clearly converges to zero as $n$ tends to infinity.

\medskip \noindent \textbf{Step  2}
\
>From Step 1,    it follows that  to prove  Theorem \ref{th2}
it suffices to show
 \begin{equation}
 \langle Y \rangle_{\frac 43, [0,T]} = KT\,.
 \end{equation}
It is easy to verify  that the process $Y$ has stationary increments  and  is self-similar of order $\frac 34$.   As a consequence,  the sequence
  $\{Y_{t_{i+1}} -Y_{t_i}, i\ge 0\}$ has the same law as $ \{\left( \frac Tn \right) ^{\frac 34} \xi_i, i\ge 0\} $, where
\[
\xi_i=  \int_{-\infty}^{i+1} E^\theta W_{\theta\sqrt{i+1-r}}dB_r-\int_{-\infty}^{i} E^\theta W_{\theta\sqrt{i-r}}dB_r.
\]
It suffices to show that $\frac 1n \sum_{i=0}^{n-1} |\xi_i |^{\frac 43}$ converges in $L^1$ to $K$.   By the ergodic theory,  we know that,
\[
 \lim_{n\to\infty} \frac 1n\sum_{i=1}^{n} |\xi_i|^\frac43 = Z=E(|\xi_1 |^{\frac 43} | \mathcal{I}),
 \]
 in $L^1$, where $\mathcal I$ is the invariant $\sigma$-field. We claim that the random variable $Z$ is a constant. To prove this we will show that both  random variables $E^WZ$ and $E^BZ$ are constant, where $E^W$ and $E^B$ denote, respectively, the mathematical expectation with respect to the processes $W$ and $B$.

Let us first compute $E^WZ$. Let  $C_0=E|\theta|^\frac 43$. Then, we can write
\begin{eqnarray*}
E^W |\xi_i|^\frac43
&=& C_0 \left(E^W\left(\int_{-\infty}^{i+1} E^\theta W_{\theta\sqrt{i+1-r}}dB_r-\int_{-\infty}^{i} E^\theta W_{\theta\sqrt{i-r}}dB_r\right)^2\right)^\frac23\\
&=& C_0\Bigg(\int_{-\infty}^{i+1}\int_{-\infty}^{i+1} E^W\Big[\left(E^\theta W_{\theta\sqrt{i+1-r}}-E^\theta W_{\theta\sqrt{(i-r)^+}}\right)\\
&& \qquad \times \left(E^\eta W_{\eta\sqrt{i+1-s}}-E^\eta W_{\eta\sqrt{(i-s)^+}}\right)\Big]dB_rdB_s \Bigg)^\frac23\,,
\end{eqnarray*}
where the double integral  $\int\int \cdots   dB_rdB_s$  with
respect to $B$ is a Stratonovich-type integral. Thus,
\begin{eqnarray*}
E^W |\xi_i|^\frac43
&=&C_0\left(\dfrac 12 \int_{-\infty}^{i+1}\int_{-\infty}^{i+1} \left(-\sqrt{(i+1-s)+(i+1-r)}-\sqrt{(i-r)^++(i-s)^+}\right.\right. \\
&&\quad\left.  \left.+\sqrt{(i+1-r)+(i-s)^+}+\sqrt{(i+1-s)+(i-r)^+}\right) dB_rdB_s \right)^\frac23\\
&=&C_0\left(\dfrac 14 \int_{-\infty}^{i+1}\int_{-\infty}^{i+1} \int_{(i-r)^+}^{i+1-r}\int_{(i-s)^+}^{i+1-s}(x+y)^{-\frac32}dydxdB_rdB_s \right)^\frac23
\,.
\end{eqnarray*}
One can  exchange the  integration order of $x,y$ and $r, s$. The domain
$-\infty<r,s,<i+1\,, (i-r)^+<x<i+1-r\,, (i-s)^+<y<i+1-s$ can be written as
$0<x,y<\infty\,, i-x<r<i+1-x\,, i-y<s<i+1-y$.  Thus, we have
\begin{eqnarray*}
E^W |\xi_i|^\frac43&=&C_0\left(\dfrac14 \int_0^\infty\int_0^\infty(x+y)^{-\frac32}(B_{i+1-x}-B_{i-x})(B_{i+1-y}-B_{i-y})dydx\right)^\frac23\\
&=&C_0\left(\dfrac14 \int_0^\infty\int_0^\infty\dfrac 1{\Gamma(\frac32)}\int_0^\infty e^{-(x+y)z}z^\frac12dz(B_{i+1-x}-B_{i-x})(B_{i+1-y}-B_{i-y})dydx\right)^\frac23\\
&=&C_0\left(\dfrac1{4\Gamma(\frac32)}\int_0^\infty\left(\int_0^\infty (B_{i+1-x}-B_{i-x})e^{-xz}dx\right)^2z^\frac12dz \right)^\frac23.
\end{eqnarray*}
For any fixed $x$ and $y$  in $\mathbb{R}$,    the correlation between the Gaussian random variables $B_{1-x} -B_{-x}$ and $B_{i+1-y} -B_{i-y}$ is   zero when  $i$  is sufficiently large.   This implies that the sequence
\[
\int_0^\infty\left(\int_0^\infty (B_{i+1-x}-B_{i-x})e^{-xz}dx\right)^2z^\frac12dz
\]
is stationary and ergodic.  As a consequence, $\frac 1n
\sum_{i=0}^{n-1} E^W |\xi_i|^{\frac 43}$ converges to the constant
$K$ given  in (\ref{r2}).

Finally, we show that $E^BZ$ is constant.   We can write
\begin{eqnarray*}
E^B |\xi_i|^\frac43&=& C_0 \left(\int_{\mathbb{R}}\left(E^\theta W_{\theta\sqrt{(i+1-r)^+}} -E^\theta W_{\theta\sqrt{(i-r)^+}} \right)^2dr\right)^\frac23.
\end{eqnarray*}
For any fixed $r$ and $s$ in $\mathbb{R}$, the covariance between the random variables $\eta_0(s)$ and $\eta_i(r)$, where
\[
\eta_i(r) =E^\theta W_{\theta\sqrt{(i+1-r)^+}} -E^\theta W_{\theta\sqrt{(i-r)^+}},
\]
is given by
\begin{eqnarray*}
E^W(\eta_0(s) \eta_i(r) )&=&
\frac 12 E(|\theta|)  \Big(  -\sqrt{(i+1-r)^+ + (1-s)^+} +\sqrt{(i+1-r)^+ + (-s)^+}\\
&&+\sqrt{(i-r)^+ + (1-s)^+}-\sqrt{(i-r)^+ + (-s)^+} \Big),
\end{eqnarray*}
and it converges to zero as $i$ tends to infinity. Again, this implies that the sequence
\[
\int_{\mathbb{R}}\left(E^\theta W_{\theta\sqrt{(i+1-r)^+}} -E^\theta W_{\theta\sqrt{(i-r)^+}} \right)^2dr
\]
is stationary and ergodic, and as a consequence,  $\frac 1n \sum_{i=0}^{n-1}
E^B |\xi_i|^{\frac 43}$ converges to a constant.
\end{proof}

\setcounter{equation}{0}
 \section{Proof of Theorem \ref{th1}}

 In this section we proceed to the proof of Theorem \ref{th1}, where the $\frac 43$-variation is in $L^1(\Omega)$, and the constant $K$ has the alternative expression given by (\ref{r2}).

 Fix a partition $s_k= \frac {kT}N$, $k=0,\dots, N$. For any point $t$ we denote by $t(N)$
 the maximum point of the partition
  on  the left of $t$, namely, $t(N)=t_k$ if $s_k\le t<s_{k+1}$. We  approximate the process $\ga_t$
  defined in \eref{gammat} by a sequence of processes
   obtained by freezing the time coordinate of $\displaystyle L_r^{y+ B_r  }- L_r^{B_r} $ at the point $r=r(N)$, that is,
 \[
 \gamma^N_t= \int_0^t   \int_{\mathbb{R}}     p_{t-r}(y)  \left( L_{r(N)}^{y+ B_{r}  }- L_{r(N)}^{B_{r}} \right) dy dB_r.
 \]
 The proof will be divided into several steps.

 \medskip \noindent
 \textbf{Step  1}\quad
 We claim that
 \begin{equation} \label{e1}
 \lim_{N\rightarrow \infty}  \limsup_{n\rightarrow \infty} \sum_{k=0}^{N-1}
 ES^{[\frac {kT}N, \frac{(k+1)T} N]}_{\frac 43,n} (\gamma -  \gamma^{N}) =0.
\end{equation}
 Consider a uniform partition of the interval $[kT/N, (k+1)T/N]$ denoted by
 $r_0<r_1< \cdots  < r_n$, where $r_j= \frac {kT}N + \frac {jT}{nN}$,  $j=0, 1, \dots, n$.  Then,
  \begin{equation}
S_{n,N}^k:=S^{[\frac {kT}N, \frac{(k+1)T} N]}_{\frac 43,n} (\gamma -
\gamma^{N})= \sum_{j=0}^{n-1}|\Delta _{j} (\gamma -\gamma^N)|^{\frac
43 }\,, \label{e.3.2}
 \end{equation}
 where $\Delta _{j} (\gamma- \gamma^N) = (\gamma -  \gamma^{N})_{r_{j+1} }- (\gamma -  \gamma^{N})_{r_{j } }$.
 Let $f^N_r(y)= L_r^{y+ B_r  }- L_r^{B_r}-  L_{r(N) }^{y+ B_{r } }+ L_{r(N) }^{B_{r }}$. Then,
\[
(\gamma  -\gamma^N)_t=\int_0^t  \int_{\mathbb{R}} p_{ t-r}(y) f^N_r dydB_r.
\]
As a consequence,
\begin{eqnarray}
 S_{n,N}^k &=& \sum_{j=0}^{n-1}\left|\int_0  ^{r_{j+1}}     \int_{\mathbb{R}}
 p_{ r_{j+1}-r}(y) f^N_r dy  dB_r -\int_0 ^{r_{j }}      \int_{\mathbb{R}}
 p_{ r_{j}-r}(y) f^N_r dy dB_r\right|^\frac43\nonumber\\
&= & \sum_{j=0}^{n-1}\left|\int_{r_j}  ^{r_{j+1}}      \int_{\mathbb{R}}
p_{ r_{j+1}-r}(y) f^N_r  dy  dB_r +\int_0 ^{r_{j }}      \int_{\mathbb{R}} [p_{ r_{j+1}-r}
(y)-p_{r_j-r}(y)] f^N_r dy dB_r\right|^\frac43\nonumber\\
&\le & C\sum_{j=0}^{n-1}\left( \left|\int_{r_j}  ^{r_{j+1}}
\int_{\mathbb{R}}     p_{ r_{j+1}-r}(y) f^N_r  dy  dB_r
\right|^\frac43+ \left|\int_0 ^{r_{j }}
   \int_{\mathbb{R}} [p_{ r_{j+1}-r}(y)-p_{r_j-r}(y)] f^N_r dy dB_r\right|^\frac43\right)\nonumber\\
&=&C \sum_{j=0}^{n-1}(|\Gamma_j^k|^\frac
43+|\Phi_j^k|^\frac43),\label{e.3.3}
\end{eqnarray}
 where
  \begin{eqnarray*}
 \Gamma_j^k&=& \int_{r_j}  ^{r_{j+1}}    \int_{\mathbb{R}}     p_{ r_{j+1}-r}(y)  \left( L_r^{y+ B_r  }- L_r^{B_r}-  L_{r(N) }^{y+ B_{ r } }+ L_{r(N) }^{B_{r }} \right) dy dB_r \\
 &=& \int_{r_j}  ^{r_{j+1}}        E(  L_r^{ B_{r_{j+1}}  }- L_r^{B_r}-  L_{r(N) }^{ B_{r_{j+1}}   }+ L_{r(N) }^{B_{r }}  |\mathcal{F}_r)   dB_r,
 \end{eqnarray*}
 and
   \begin{eqnarray*}
 \Phi_j^k&=& \int_0 ^{r_{j}}     \int_{\mathbb{R}}    [ p_{ r_{j+1}-r}(y) -  p_{ r_{j }-r}(y) ]\left( L_r^{y+ B_r  }- L_r^{B_r}-  L_{r (N) }^{y+ B_{r } }+ L_{r_(N) }^{B_{r }} \right) dydB_r \\
 &=& \int_0 ^{r_j}    E(  L_r^{ B_{r_{j+1}}  }- L_r^{B_{r_{j }}}-  L_{r(N) }^{ B_{r_{j+1}}   }+ L_{r(N) }^{B_{r_{j }}  }  |\mathcal{F}_r)   dB_r.
 \end{eqnarray*}
 Therefore,
 \[
 ES_{n,N}^k \le C\left(        \sum_{j=0}^{n-1}  E( | \Gamma^k_j|^{\frac 43 } ) +
  \sum_{j=0}^{n-1} E(  | \Phi^k_j|^{\frac 43 } )\right).
 \]
Using the Burkholder  inequality we obtain
\[
 E( | \Gamma^k_j|^{\frac 43 } )  \le C  E \left(
  \int_{r_j}  ^{r_{j+1}}        E(  L_r^{ B_{r_{j+1}}  }- L_r^{B_r}-
   L_{r(N) }^{ B_{r_{j+1}}  }+ L_{r(N) }^{B_{r  }}  |\mathcal{F}_r)    ^2 dr \right) ^{\frac 23},
  \]
  and
 \[
 E(  | \Phi^k_j|^{\frac 43 }) \le C E
\left(  \int_0 ^{r_{j}}       E(  L_r^{ B_{r_{j+1}}  }- L_r^{B_{r_{j }}}-
  L_{r(N)}^{ B_{r_{j+1}}   }+ L_{r(N) }^{B_{r_{j }}  }  |\mathcal{F}_r)    ^2 dr \right)^{ \frac 23}.
\]
Let us first prove that
\begin{equation} \label{e2}
 \lim_{N\rightarrow \infty}  \limsup_{n\rightarrow \infty} \sum_{k=0}^{N-1} \sum_{j=0}^{n-1}
 E( | \Gamma^k_j|^{\frac 43 } )=0.
\end{equation}
We shall use the notation $L_{a, b}^x=L_b^x-L_a^x$.  Then,    %
 we can write
 \begin{equation} \label{e3}
E( | \Gamma^k_j|^{\frac 43 } ) \le C  \left( E \int_{r_j}  ^{r_{j+1}}
 (  L_{[r(N),r]}^{B_{r_{j+1}}} - L_{[r(N),r]}^{B_r})^2 dr \right)^{\frac 23}.
\end{equation}
Consider the Brownian motion $B_t-B_u$  where the parameter $u$ goes backward from $t$ to $0$.
    Then, Tanaka's formula applied to this Brownian motion says that for any $s<t$
 \[
 (B_t-B_s -x)_+ -(-x)_+ =  -\int_s^t \mathbf{1}_{\{B_t-  B_u>x \}} \widetilde d B_u +
 \frac 12 \int_s^t \delta_x(B_t-B_u)du,
 \]
 where $ \widetilde d$ denotes the backward It\^o integral. Making the change
 of variable $x=B_t-B_\tau $, $\tau>t$  yields
  \begin{equation}
 (B_\tau -B_s  )_+ -(B_\tau-B_t)_+ =
  -\int_s^t \mathbf{1}_{\{B_u <B_\tau \}} \widetilde d B_u + \frac 12 \int_s^t \delta_{B_\tau} (
  B_u)du\,.
  \label{tanaka}
 \end{equation}
 Therefore, letting $s=r(N)$, $t=r$ and $\tau={r_{j+1}}$ in the above equality yields
 \[
 (B_{r_{j+1}} - B_{r(N)} )_+    -  (B_{r_{j+1}} - B_r )_+   =
 -\int_{r(N)}^r  \mathbf{1}_{\{ B_u< B_{r_{j+1}} \}}\widetilde d B_u + \frac 12  L_{[r(N),r]}  ^{ B_{r_{j+1}}}.
 \]
 On the other hand, letting  $s=r(N)$ and $t=\tau =r$  gives us
  \[
 (B_r - B_{r(N)} )_+       =
 -\int_{r(N)}^r  \mathbf{1}_{\{ B_u< B_r \}}\widetilde d B_u + \frac 12  L_{[r(N),r]}  ^{ B_ r }.
 \]
This implies that
\begin{eqnarray*}
\left|L_{[r(N),r]}^{B_{r_{j+1}}} - L_{[r(N),r]}^{B_r}\right|
&\le &2\left|   ( B_{r_{j+1}} - B_{r(N)} )_+     -  (B_r - B_{r(N)} )_+ \right|  +2 (B_{r_{j+1}} - B_r )_+    \\
  && \qquad +2 \left|\int_{r(N)}^r   \left( \mathbf{1}_{\{ B_u< B_{r_{j+1}} \}}   - \mathbf{1}_{\{ B_u< B_r \}}   \right)\widetilde d B_u\right|\\
  &\le &4\left|    B_{r_{j+1}} -     B_r       \right|  +2 \left|\int_{r(N)}^r   \left( \mathbf{1}_{\{ B_u< B_{r_{j+1}} \}}   - \mathbf{1}_{\{ B_u< B_r \}}   \right)\widetilde d B_u\right|\,.
\end{eqnarray*}
Therefore,
\begin{equation} \label{e4}
E\left(   L_{[r(N),r]}^{B_{r_{j+1}}} - L_{[r(N),r]}^{B_r} \right)^2 \le
32(r_{j+1}-r) + 8
 \int_{r(N)}^r  E \left( \mathbf{1}_{\{ B_u< B_{r_{j+1}} \}}   - \mathbf{1}_{\{ B_u< B_r \}}   \right)^2 du.
 \end{equation}
 Notice that
 \begin{eqnarray*}
   E \left( \mathbf{1}_{\{ B_u< B_{r_{j+1}} \}}   - \mathbf{1}_{\{ B_u< B_r \}}   \right)^2  &=&      P( B_r  < B_u< B_{r_{j+1}} )      +      P( B_r > B_u> B_{r_{j+1}} )  \,.
 \end{eqnarray*}
 Using the density of two-dimensional Gaussian random variables one can see that the probability
 $P( B_r  \le B_u< B_{r_{j+1}} ) $ is bounded by a constant times $
 \frac {\sqrt{r_{j+1} -r}}   {\sqrt{r-u}}$, which implies
 \begin{equation} \label{e5}
 \int_{r(N)}^r  E \left( \mathbf{1}_{\{ B_u< B_{r_{j+1}} \}}   - \mathbf{1}_{\{ B_u< B_r \}}   \right)^2 du
 \le C \sqrt{ r_{j+1} -r_j} N^{-\frac 12}.
 \end{equation}
 From (\ref{e3}), (\ref{e4}) and (\ref{e5}) we obtain
 \begin{eqnarray*}
 E( | \Gamma^k_j|^{\frac 43 } )&\le & C\left( (r_{j+1} -r_j)^2 +( r_{j+1} -r_j)^{\frac 32} N^{-\frac 12} \right) ^{\frac 23}\\
&\le & C \left( n^{-2} N^{-2} + n^{ -\frac 32} N^{-2} \right)^{\frac 23} \\
&\le & C \left( n^{-\frac 43} N^{-\frac 43} + n^{ -1} N^{-\frac 43} \right),
 \end{eqnarray*}
 which implies (\ref{e2}).

To complete the proof of (\ref{e1}),  we need  to show that
\begin{equation} \label{e5.5}
 \lim_{N\rightarrow \infty}  \limsup_{n\rightarrow \infty} \sum_{k=0}^{N-1} \sum_{j=0}^{n-1}
 E( | \Phi^k_j|^{\frac 43 } )=0.
\end{equation}
We continue to use  the same notation as above. It is easy to obtain
by using the Burkholder inequality
   \[
E( | \Phi^k_j|^{\frac 43 } ) \le   \left( E \int_0 ^{r_{j}}    ( E( L_{[r(N),r]}^{B_{r_{j+1}}} - L_{[r(N),r]}^{B_{r_j}} | \mathcal{F}_r)^2 dr \right)^{\frac 23}.
   \]
In order to deal with the above  term,  we use the backward Tanaka
formula \eref{tanaka} again by taking $\tau=r_{j+1}$ and $r_j$.
Subtracting the two obtained equations, we obtain
\begin{equation} \label{e7}
  L_{[r(N),r]}^{B_{r_{j+1}}} - L_{[r(N),r]}^{B_{r_j}}= C_j(r)+D_j(r),
  \end{equation}
  where
  \[
  C_j(r)=2\left( (B_{r_{j+1}} - B_{r(N)} )_+    -  (B_{r_{j+1}} - B_r )_+  -  (B_{r_j} - B_{r(N)} )_+
 + (B_{r_{j}} - B_r )_+ \right),
 \]
 and
 \[
 D_j(r)= 2 \int_{r(N)}^r   \left( \mathbf{1}_{\{ B_u< B_{r_{j+1}} \}}   - \mathbf{1}_{\{ B_u< B_{r_j} \}}   \right)\widetilde d B_u.
\]
   Notice that
\begin{eqnarray*}
&& E[   (B_{r_{j+1}} - B_{r(N)} )_+- (B_{r_j} - B_{r(N)} )_+ | \mathcal{F}_r]\\
&&\qquad  = E^\xi[   ( \sqrt{r_{j+1}-r} \xi + B_r - B_{r(N)} )_+- (\sqrt{r_{j  }-r} \xi + B_r - B_{r(N)} )_+] ,
\end{eqnarray*}
 where $\xi$ is $N(0,1)$. Hence,
 \[
| E[   (B_{r_{j+1}} - B_{r(N)} )_+- (B_{r_j} - B_{r(N)} )_+ | \mathcal{F}_r]|
\le C( \sqrt{r_{j+1}-r} -\sqrt{r_{j }-r}).
\]
Therefore, we obtain
\begin{eqnarray*}
   \int _0^{r_j} E(C_j(r) | \mathcal{F}_r)^2 dr
  &\le & C\int_0  ^{r_{j}}  ( \sqrt{r_{j+1}-r} -\sqrt{r_{j }-r})^2 dr   \\
&\le &   C \int_0^{r_j} (r_{j+1}- r_j)^{\frac 74 } (r_j-r)^{-\frac 34} dr \le  C (nN)^{-\frac 74}.
\end{eqnarray*}
As a consequence,
\begin{equation} \label{e6}
\lim_{N\rightarrow \infty}  \limsup_{n\rightarrow \infty}
\sum_{k=0}^{N-1} \sum_{j=0}^{n-1}  \left(E   \int _0^{r_j} E(C_j(r) | \mathcal{F}_r)^2 dr
\right)^{\frac 23}  =0.
\end{equation}
 For the second term  in the decomposition (\ref{e7}) we can write
\begin{eqnarray}  \label{e8}
&&E \int _0^{r_j} E(D_j(r) | \mathcal{F}_r)^2 dr \\
&&\qquad \le \notag
 \int_0  ^{r_{j}}   \int_{r(N)}^r   E \left[ E\left( \mathbf{1}_{\{ B_{r_j} <B_u < B_{r_{j+1}} \}}   - \mathbf{1}_{\{ B_{r_j} >B_u> B_{r_{j+1}} \}}  | \mathcal{F}_r  \right)  \right]^2  du dr.
\end{eqnarray}
>From Lemma \ref{A1} it follows that
\begin{eqnarray*}
&&E \left[ E\left( \mathbf{1}_{\{ B_{r_j} <B_u < B_{r_{j+1}} \}}   - \mathbf{1}_{\{ B_{r_j} >B_u> B_{r_{j+1}} \}}  | \mathcal{F}_r  \right)  \right]^2  \\
&&\qquad \le C (r-u)^{-\frac 12}  \left( 2\sqrt{2(r_j-r) +\frac T{nN}}
-\sqrt{2(r_j-r)} -\sqrt{ 2(r_j-r) +\frac {2T}{nN}} \right).
\end{eqnarray*}
Substituting this expression into (\ref{e8}) yields
 \begin{eqnarray*}
 &&E \int _0^{r_j} E(D_j(r) | \mathcal{F}_r)^2 dr \\
 &\le &  C  \int_{0}^{r_j}   \int_{r(N)} ^r
 (r-u)^{-\frac 12} \\
 &&\times \left( 2\sqrt{ 2(r_j-r)+ \frac T{nN}} - \sqrt{2(r_j-r)} -\sqrt{2(r_j-r)+2\frac T{nN}} \right)dudr\\
 &\le& C N^{-\frac 12} \int_{0}^{r_j}
  \left( 2\sqrt{ 2(r_j-r)+ \frac T{nN}} - \sqrt{2(r_j-r)} -\sqrt{2(r_j-r)+2\frac T{nN}} \right) dr \\
 &\le& CN^{-\frac 12}  \Bigg(  2\left( 2\left(\frac kN+\frac j{Nn}\right)+\frac 1{Nn}\right)^\frac32 - 2\left( \frac 1{Nn}\right)^\frac32\\
  &&-\left( 2\left(\frac kN+\frac j{Nn}\right)\right)^\frac32 -\left( 2\left(\frac kN+\frac j{Nn}\right)+ 2\frac 1{Nn}\right)^\frac32+\left(  2\frac 1{Nn}\right)^\frac32  \Bigg) \\
     &\le&   C N^{-2} n^{-\frac 32}     \sup_{j,n}
      \left( 2 ( 2(nk+j)+1) ^{\frac 32}-2  - (2(nk+j)) ^{\frac 32} -(2(nk+j)+2) ^{\frac 32}+(2 ) ^{\frac 32}\right)^{\frac 23}\\
       &=&   C  N^{-2} n^{-\frac 32} \sup_j
      \left( 2 ( 2j+1) ^{\frac 32}-2  - (2j) ^{\frac 32} -(2j+2) ^{\frac 32}+(2 ) ^{\frac 32}\right)^{\frac 23}\\
      &\le& CN^{-2} n^{-\frac 32}.
    \end{eqnarray*}
    Therefore,
  \[
\sum_{k=0}^{N-1} \sum_{j=0}^{n-1}  \left(E   \int _0^{r_j} E(D_j(r) | \mathcal{F}_r)^2 dr
\right)^{\frac 23}  \le C N^{-\frac 13},
\]
which implies
\begin{equation}\label{e9}
\lim_{N\rightarrow \infty}  \limsup_{n\rightarrow \infty}\sum_{k=0}^{N-1} \sum_{j=0}^{n-1}  \left(E   \int _0^{r_j} E(D_j(r) | \mathcal{F}_r)^2 dr
\right)^{\frac 23}=0.
\end{equation}
Then, (\ref{e6}) and (\ref{e9}) imply (\ref{e5.5}), which completes the proof of  (\ref{e1}).

\medskip \noindent
\textbf{Step  2}
Define
\[
\gamma^{N,1}_t=\int_{t(N)}^t     \int_{\mathbb{R}}p_{t-r}(y) \left( L_{r(N)}^{y+ B_{r}  }- L_{r(N)}^{B_{r}} \right) dy dB_r.
\]
We claim that, for each fixed $N$,
\[
 \left \langle   \gamma^N - \gamma^{N,1} \right\rangle_{\frac 43, [0,T]}=0.
\]
It suffices to show that for each $k=0, \dots, N-1$,  the $\dfrac 43$-variation
of $\gamma^N- \gamma^{N,1}$ over the interval $[kT/N, (k+1)T/N)$ is zero.  When $t\in [kT/N, (k+1)T/N), t(N)=kT/N$, and
\[
(\gamma^N-\gamma^{N,1})(t)=\int_0^{\frac kNT}\int_\mathbb R p_{t-r}(y)(L_{r(N)}^{y+B_r}-L_{r(N)}^{B_r})dydB_r.
\]
With the same notation as in Step 1, set
\[
S_{n,N}:=S^{[\frac {kT}N, \frac{(k+1)T} N]}_{\frac 43,n} (\gamma^N -  \gamma^{N,1})= \sum_{j=0}^{n-1}|\Delta _{j} (\gamma^N -\gamma^{N,1})|^{\frac 43 },
 \]
 where
 \begin{eqnarray*}
 \Delta _{j} (\gamma^N -\gamma^{N,1})
 &=& \int_0^{\frac {kT}N}\int_\mathbb R (p_{r_{j+1} -r} (y)- p_{r_{j } -r} (y))\left( L_{r(N)}^{y+ B_r  }- L_{r(N)}^{B_r} \right)dydB_r \\
 &=& \int_0^{\frac {kT}N}  \int_0^{r(N)} (p_{r_{j+1} -r}(B_r-B_s) -p_{r_{j} -r}(B_r-B_s)dsdB_r.
 \end{eqnarray*}
Applying the Burkholder  inequality yields
\begin{eqnarray*}
&&E| \Delta _{j} (\gamma^N -\gamma^{N,1})|^{\frac 43}\\
&&\qquad  \le
C E\left(\int_0^{\frac {kT}N} \left( \int_0^{r(N)} (p_{r_{j+1} -r}(B_r-B_s) -p_{r_{j} -r}(B_r-B_s))ds\right)^2dr  \right)^{\frac 23} \\
&&\qquad  \le  C \left(\int_0^{\frac {kT}N} E \left( \int_0^{r(N)} (p_{r_{j+1} -r}(B_r-B_s) -p_{r_{j} -r}(B_r-B_s))ds\right)^2dr  \right)^{\frac 23}.
\end{eqnarray*}
Then, for any $u<s< r(N)<r \le t(N) \le r_j<r_{j+1}$  we can write, using Lemma \ref{A2}
\begin{eqnarray*}
&&E\left( (p_{r_{j+1} -r}(B_r-B_s) -p_{r_{j} -r}(B_r-B_s)) (p_{r_{j+1} -r}(B_r-B_u) -p_{r_{j} -r}(B_r-B_u)) \right)  \\
&&\quad  = \left( (r_{j+1}-s) (r_{j+1}-r+s-u) + (r_{j+1}-r)(r-s) \right) ^{-\frac 12} \\
&&\qquad -  \left( (r_{j+1}-s) (r_{j}-r+s-u) + (r_{j+1}-r)(r-s) \right) ^{-\frac 12} \\
&&\qquad -  \left( (r_{j}-s) (r_{j+1}-r+s-u) + (r_{j}-r)(r-s) \right) ^{-\frac 12} \\
&&\qquad +  \left( (r_{j}-s) (r_{j}-r+s-u) + (r_{j}-r)(r-s) \right) ^{-\frac 12}\\
&&\quad =-\frac 12 \int_{r_j} ^{r_{j+1}}
\left( (r_{j+1} -s) (\theta -r+s-u) + (r_{j+1}-r) (r-s) \right) ^{-\frac 32}(r_{j+1}-s) d\theta\\
&& \qquad +  \frac 12 \int_{r_j} ^{r_{j+1}}
\left( (r_{j} -s) (\theta -r+s-u) + (r_{j}-r) (r-s) \right) ^{-\frac 32}(r_{j}-s) d\theta.
\end{eqnarray*}

Integrating in the variable $u$ yields
\begin{eqnarray*}
&&\int_0 ^sE\left( (p_{r_{j+1} -r}(B_r-B_s) -p_{r_{j} -r}(B_r-B_s)) (p_{r_{j+1} -r}(B_r-B_u) -p_{r_{j} -r}(B_r-B_u)) \right)du  \\
&&\quad  = -\int_{r_j} ^{r_{j+1}}
\left( (r_{j+1} -s) (\theta -r+s-u) + (r_{j+1}-r) (r-s) \right) ^{-\frac 12}|_{u=0}^{u=s} d\theta\\
&& \qquad +\int_{r_j} ^{r_{j+1}}
\left( (r_{j} -s) (\theta -r+s-u) + (r_{j}-r) (r-s) \right) ^{-\frac 12}|_{u=0}^{u=s} d\theta\\
&&\quad =-\frac 12 \int_{r_j} ^{r_{j+1}}\int_{r_j} ^{r_{j+1}} \Bigg(
\left((\eta -s) (\theta -r+s) + (\eta-r) (r-s) \right)^{-\frac 32} \theta\\
&&\qquad - \left((\eta -s) (\theta -r) + (\eta-r) (r-s) \right)^{-\frac 32}( \theta-s) \Bigg)d\eta d\theta\\
&&\quad \le  C \int_{r_j} ^{r_{j+1}}\int_{r_j} ^{r_{j+1}}  
\left((r -s) (\theta -r ) + (\eta-r) (r-s) \right)^{-\frac 32} d\eta d\theta\\
&&\quad \le C (r-s) ^{-\frac 32}\left( \int_{r_j} ^{r_{j+1}} (\eta-r) ^{-\frac 34}  d\eta
\right)^2 \\
&&\quad \le C (r-s) ^{-\frac 32}  \left(  (r_{j+1}-r)^{\frac14} -(r_{j }-r)^{\frac14} \right)^2\\
&&\quad \le C(r-r(N)) ^{-\frac 34} (r(N)-s)^{-\frac 34}  (r_{j+1} -r_j)^{ 2-\frac 32 \alpha }  (r_j-r)^{ -\frac 32 (1-\alpha)},
\end{eqnarray*}
for any  $\alpha \in (0,1)$. Choosing $\alpha =\frac 14$ and   integrating in the variables $0<s<r(N)<r<t(N)$,  we obtain
\[
E\left(\int_0^{\frac {kT}N} \left( \int_0^{r(N)} (p_{r_{j+1} -r}(B_r-B_s) -p_{r_{j} -r}(B_r-B_s))ds\right)^2dr  \right)^{\frac 23}
\le C_N  (r_{j+1} -r_j)^{-\frac {13}{12}}.
\]
As a consequence,
\[ 
E(S_{n,N}) \le C_N n^{-\frac 1{12}},
\]
which converges to zero as $n$ tends to infinity.

\medskip \noindent
\textbf{Step  3}

Let us compute the  $\frac 43$ variation of the  process $\gamma^{N,1}$ in the interval $I_{k,N}:=\left[\frac {kT}N, \frac {(k+1)T} N \right]$.
Set $\tau_N =\frac {kT}N=t(N)$.  By  the results of  \cite{perkins}, there exists a two-sided Brownian motion $\{W_x, x\in \mathbb{R}\}$ independent of
$\{ B_r, r\ge r_N, L_{\tau_N}^{B_{\tau_N}} \}$ such that for any $x>y, x,y \in \mathbb{R}$,
\[
L_{\tau_N}^x - L_{\tau_N}^y =2 \int_y^x  \sqrt{L^z_{\tau_N}} dW_z + \int_y^x \alpha(z) dz.
\]
Using the fact that the random variables  $\{ B_r, r\ge r_N, L_{\tau_N}^{B_{\tau_N}} \}$ are independent of $W$ we can write for any $r\ge \tau_N$,
\[
L_{\tau_N}^{B_{r}+y}  - L_{\tau_N}^{B_{r} } =2 \int_{B_{r } } ^{B_{r}+y}  \sqrt{L^z_{\tau_N}} dW_z + \int_{B_{r } } ^{B_{r} +y}\alpha(z) dz.
\]
We decompose the process $\gamma^{N,1}$ as follows:
\[
 \gamma^{N,1}=\gamma^{N,2}+\gamma^{N,3}+\gamma^{N,4},
\]
where
 \[
\gamma^{N,2}=\int_{\tau_N}^t        E^\theta  \left( \int_{B_{r } } ^{B_{r} +\theta\sqrt{t-r}}\alpha(z) dz \right)   dB_r,
\]
\[
\gamma^{N,3}=\int_{\tau_N}^t        E^\theta  \left( \int_{B_{r } } ^{B_{r} +\theta\sqrt{t-r}} \left(\sqrt{L^z_{\tau_N}} -
\sqrt{L^{B_{r }}_{\tau_N}} \right)
dW_z \right)   dB_r,
\]
and
\[
\gamma^{N,4}= \sqrt{L^{B_{\tau_N}}_{r_N}} \int_{\tau_N}^t        E^\theta  \left( W(B_{r} +\theta\sqrt{t-r})-
W(B_r) \right)   dB_r,
\]
where here $\theta$ denotes a random variable with law $N(0,1)$, independent of $B$ and $W$.
We claim that  for any $k$,
\begin{equation}
 \left \langle \gamma^{N,2} \right\rangle_{\frac 43, I_{k,N}}=0, \label{eq1}
 \end{equation}
 and
 \begin{equation}
 \left \langle \gamma^{N,3} \right\rangle_{\frac 43, I_{k,N} }=0, \label{eq2}
 \end{equation}
\textit{Proof of (\ref{eq1})}:
With the same notation as in Step 1, set
\[
S_{n,N}:=S^{I_{k,N}}_{\frac 43,n} (  \gamma^{N,2})= \sum_{j=0}^{n-1}|\Delta _{j} (\gamma^{N,2})|^{\frac 43 },
 \]
 where $ \Delta _{j} (\gamma^{N,2}) = \gamma^{N,2}_{r_{j+1}} -  \gamma^{N,2}_{r_{j}}$.
Then
\begin{eqnarray*}
&&  \sum_{j=0}^{n-1}E|\Delta _{j} (\gamma^{N,2})|^{\frac 43}
  = \sum_{j=0}^{n-1}E\Bigg|\int_{\tau_N}^{r_{j+1}} E^\theta  \left(\int_{B_r}^{B_r+\sqrt{r_{j+1}-r}\theta}\alpha(y)dy\right)dB_r \\
  &&\qquad -\int_{\tau_N}^{r_{j}} E^\theta \left(\int_{B_r}^{B_r+\sqrt{r_{j}-r}\theta}\alpha(y)dy\right)dB_r\Bigg|^\frac43 \\
&& \quad =  \sum_{j=0}^{n-1}E\Bigg|\int_{\tau_N}^{r_{j}} E^\theta \left(\int_{B_r+ \sqrt{r_j-r} \theta}^{B_r+\sqrt{r_{j+1}-r}\theta}\alpha(y)dy\right)dB_r\\
&&\qquad +\int_{r_j}^{r_{j+1}} E^\theta \left(\int_{B_r}^{B_r+\sqrt{r_{j+1}-r}\theta}\alpha(y)dy\right)dB_r\Bigg|^\frac43 \\
&& \quad \le  C  \sum_{j=0}^{n-1}\left\{\left|\int_{\tau_N}^{r_j}E\left( E^\theta \int_{B_r+ \sqrt{r_j-r} \theta}^{B_r+\sqrt{r_{j+1}-r}\theta}\alpha(y)dy\right)^2dr\right|^\frac23\right.\\
&&\qquad +\left.\left|\int_{r_j}^{r_{j+1}}E\left( E^\theta \int_{B_r}^{B_r+\sqrt{r_{j+1}-r}\theta}\alpha(y)dy\right)^2dr\right|^\frac23 \right\}\\
&&\quad =A_n +B_n.
\end{eqnarray*}
 From \cite{perkins}, we have the following expression for the process $\alpha(y)$,
\[
\alpha(y)=I_{\{y\ge\underline B_s\}}\left[2I_{\{y\le0\}}+2I_{\{y\le B_s\}}+I_{\{y\le \overline B_s\}}L(s,y)
\left(\frac{4I_{\{y\ge B_s\}}}{L(s,y)+2y^-}-\frac{L(s,y)+2y^-}{s-A(s,y)}\right)\right]
\]
with
\[
\overline B_s=\sup\{B_u, u\le s\},\underline B_s=\inf\{ B_u:u\le s\}.
\]
Let $\gamma(y)=-I_{\{y\ge\underline B_s\}}I_{\{y\le \overline B_s\}}L(s,y)\frac{L(s,y)+2y^-}{s-A(s,y)}$, and write $\alpha(y)=\beta(y)+\gamma(y)$.  Then $\beta(y)$ is bounded, and
from the result of section 3 (page 277 and 278) in \cite{rw}, we can get that $E\int_{\mathbb R} |\gamma(y)|^pdy<\infty$ for all $ p>1$.
As a consequence,  by Lemma \ref{lemma1} we obtain
\begin{eqnarray*}
&&\lim_{n\to\infty}\sum_{j=0}^{n-1}\left|\int_{\tau_N}^{r_j}E\left( E^\theta \int_{B_r+ \sqrt{r_j-r} \theta}^{B_r+\sqrt{r_{j+1}-r}\theta}\beta(y)dy\right)^2dr\right|^\frac23\\
&& \qquad \le  C \lim_{n\to\infty}\sum_{j=0}^{n-1}\left|\int_{\tau_N}^{r_j}\left(\sqrt{r_{j+1}-r}-\sqrt{r_j-r}\right)^2  dr\right|^\frac23=0.
\end{eqnarray*}
To handle the term containing $\gamma(y)$, we choose $p,q$ such that $\frac 1p+\frac 1q=1$ and $p<\frac 43$. Then, again by Lemma \ref{lemma1}
\begin{eqnarray*}
&&\lim_{n\to\infty}\sum_{j=0}^{n-1}\left|\int_{\tau_N}^{r_j}E\left( E^\theta \int_{B_r+ \sqrt{r_j-r} \theta}^{B_r+\sqrt{r_{j+1}-r}\theta}\gamma(y)dy\right)^2dr\right|^\frac23\\
&&\qquad \le  \lim_{n\to\infty}\sum_{j=0}^{n-1}\left|\int_{\tau_N}^{r_j}  E(|\theta|^\frac 2p) (\sqrt{r_{j+1}-r}-\sqrt{r_j-r})^\frac 2p E\left[\int_\mathbb R |\gamma(y)|^qdy\right]^\frac 2q  dr\right|^\frac23\\
&&\qquad \le  C\lim_{n\to\infty} \sum_{j=0}^{n-1}\left|\int_{\tau_N}^{r_j} (\sqrt{r_{j+1}-r}-\sqrt{r_j-r})^\frac 2p   dr\right|^\frac23=0.
\end{eqnarray*}
Hence we have $A_n$ goes to zero as $n$  goes to infinity. The convergence to zero  of  $B_n$  as $n$ tends to infinity follows from
\begin{eqnarray*}
&&\lim_{n\to\infty}\sum_{j=0}^{n-1}\left| \int_{r_j}^{r_{j+1}} E\left( E^\theta \int_{B_r}^{B_r+\sqrt{r_{j+1}-r}\theta}\beta(y)dy\right)^2dr\right|^\frac23\\
&&\qquad \le  C \lim_{n\to\infty}\sum_{j=0}^{n-1}\left|\int_{r_j}^{r_{j+1}} \left(r_{j+1}-r\right)  dr\right|^\frac23=C \lim_{n\to\infty}\sum_{j=0}^{n-1} \left(\frac 1n\right)^\frac 43=0,
\end{eqnarray*}
and, choosing  $p,q$ such that $\frac 1p+\frac 1q=1$ and $p<2$,
\begin{eqnarray*}
&&\lim_{n\to\infty}\sum_{j=0}^{n-1}\left|\int_{r_j}^{r_{j+1}} E\left( E^\theta \int_{B_r}^{B_r+\sqrt{r_{j+1}-r}\theta}\gamma(y)dy\right)^2dr\right|^\frac23\\
&& \quad \le  \lim_{n\to\infty}\sum_{j=0}^{n-1}\left|\int_{r_j}^{r_{j+1}}   E(|\theta|^\frac 2p) (\sqrt{r_{j+1}-r})^\frac 2p E\left[\int_\mathbb R |\gamma(y)|^qdy\right]^\frac 2q  dr\right|^\frac23\\
&& \quad \le  C\lim_{n\to\infty} \sum_{j=0}^{n-1}\left|\int_{r_j}^{r_{j+1}}  (r_{j+1}-r)^\frac 1p   dr\right|^\frac23= C\lim_{n\to\infty} \sum_{j=0}^{n-1} \left(\frac 1n\right)^{(\frac 1p+1)\frac 23}= 0.
\end{eqnarray*}

\noindent
\textit{Proof of (\ref{eq2})}:
With the same notation as in Step 1, set
\[
S_{n,N}:=S^{I_{k,N}}_{\frac 43,n} (  \gamma^{N,3})= \sum_{j=0}^{n-1}|\Delta _{j} (\gamma^{N,3})|^{\frac 43 },
 \]
 where $ \Delta _{j} (\gamma^{N,3}) = \gamma^{N,3}_{r_{j+1}} -  \gamma^{N,3}_{r_{j}}$. As in the proof of (\ref{eq1}), applying the Burkholder  inequality we obtain
 \[
  \sum_{j=0}^{n-1}E|\Delta _{j} (\gamma^{N,3})|^{\frac 43}
\le C( C_n+D_n),
\]
 where
 \[
 C_n=E\sum_{j=1}^{n-1} \left(\int^{r_j}_ {r_N}     E^\theta  \left(
\int_{B_r+\theta \sqrt{r_{j }-r}}^{B_r +\theta \sqrt{r_{j+1}-r}}   \left(\sqrt{ L_{\tau_N}^{ z }  }-     \sqrt{ L_{\tau_N}^{ B_{r}}  }   \right)  dW_z     \right)^2 dr  \right) ^{2/3}
 \]
and
\[
D_n=E\sum_{j=1}^{n-1} \left(\int^{r_{j+1}}_ {r_j}     E^\theta  \left(
\int_{B_r}^{B_r +\theta \sqrt{r_{j+1}-r}}   \left(\sqrt{ L_{\tau_N}^{ z }  }-     \sqrt{ L_{\tau_N}^{ B_{r}}  }   \right)  dW_z     \right)^2 dr  \right) ^{2/3}.
 \]
 By the H\^older continuity in space variable  of the local time, there exists a random variable $G$ with moments of all orders such that
 \[
 |L_{\tau_N}^{ z }- L_{\tau_N}^{ B_{r}}| \le G |z-B_r|^{\frac 12-\epsilon},
 \]
 for all $z\in\mathbb{R}$ and $r\in [0,T]$.  Therefore, the term $D_n$ can be estimated as follows
 \[
D_n \le C \sum_{j=1}^{n-1} \left(\int^{r_{j+1}}_ {\tau_{r_j}}    E E^\theta  \left(  G
\int_{B_r}^{B_r +\theta \sqrt{r_{j+1}-r}}   |z-B_r|^{\frac 12-\epsilon}   dz     \right)  dr  \right) ^{2/3} \le C n^{-\frac 16+\frac\vare 3}.
 \]
 The estimation of the term $C_n$ is more delicate. First we write
 \[
    E^\theta  \left(
\int_{B_r+\theta \sqrt{r_{j }-r}}^{B_r +\theta \sqrt{r_{j+1}-r}}   \left(\sqrt{ L_{\tau_N}^{ z }  }-     \sqrt{ L_{\tau_N}^{ B_{r}}  }   \right)  dW_z    \right)
=\int_{\mathbb{R}} \Phi(z) dW_z,
\]
where
\[
\Phi(z)= \left(\sqrt{ L_{\tau_N}^{ z }  }-     \sqrt{ L_{\tau_N}^{ B_{r}}  }   \right)
\int_{\mathbb{R}} \left( p_{r_{j+1}-r}(y)- p_{r_{j}-r}(y) \right)  \mathbf{1}_{[B_r, B_r+y]}(z)dy.
\]
As a consequence,
\begin{eqnarray*}
&&E\left(\int_{\mathbb{R}} \Phi(z) dW_z\right)^2\le
E\Bigg( G^2  \int_{\mathbb{R}^2}
\left( p_{r_{j+1}-r}(y)- p_{r_{j}-r}(y) \right)
\left( p_{r_{j+1}-r}(y')- p_{r_{j}-r}(y') \right) \\
&&\qquad \times
 \int_{[B_r, B_r+y]\cap [B_r,B_r+y']} |z-B_r|^{\frac 12-\epsilon}  dzdydy'\Bigg) \\
 && \quad \le C \left( \int_{\mathbb{R}} \left( p_{r_{j+1}-r}(y)- p_{r_{j}-r}(y) \right)
 |y|^{\frac 34 -\frac \epsilon 2} dy\right)^2 \\
 &&\quad \le C \left( (r_{j+1} -r) ^{\frac 38-\frac \epsilon 4}-
 (r_{j} -r) ^{\frac 38-\frac \epsilon 4} \right)^2   = C \left(  \int_{r_j}^{r_{j+1}} (\th-r) ^{-\frac 58-\frac \epsilon 4}  d\th \right)^2 \\
 && \quad \le C \left(  \int_{r_j}^{r_{j+1} } (\th-r_j) ^{-\frac 14+\frac \epsilon 2}   d\th \
  ( r_j-r )^ {-\frac 38-\frac {3\epsilon  }  4 } \right)^2  \\
 && \quad \le C (r_{j+1} -r_j) ^{\frac 32+ \epsilon}
 (r_j-r)^{-\frac 34 -\frac {3\epsilon}2},
 \end{eqnarray*}
 and we obtain
 \[
 C_n \le C n^{-\frac 23  \epsilon}.
 \]
This proves \eref{eq2}.

 \medskip\noindent
\textbf{Step  4}

Let us compute the $\frac 43$-variation of the process $\gamma^{N,4}$. By
Theorem  \ref{th2}, the $\frac 43$ variation in $L^1$ of the process
\[
Z_t=\int_0^tE^\theta (W_{B_r+ \theta\sqrt{t-r}} - W_{B_r})dB_r,
\]
in an interval $[a,b]$ is $K(b-a)$. In fact, this process has the same  distribution as
\[
X_t=\int_0^tE^\theta (W_{ \theta\sqrt{t-r}}  )dB_r.
\]
This follows from the fact that the processes
\[
\{  (B_t, W_{B_r+ y} - W_{B_r}), t\ge 0,  r\ge 0, y\in \mathbb{R} \}
\]
and
\[
\{  (B_t, W_y), t\ge 0,  r\ge 0, y\in \mathbb{R} \}
\]
have the same law, as it can be easily seen by computing the characteristic function of the finite dimensional distributions of both processes. Therefore,
\[
\langle \gamma^{N,4} \rangle_{\frac 43 ,[0,T]} = K\sum_{k=0}^{N-1} (L_{kT/N}^{B_{kT/N}})^{\frac 23} \frac TN.
\]
By Step 2 and Step 3, we have that $\langle \gamma^N\rangle _{\frac 43, [0,T]}
=\langle \gamma^{N,4}\rangle _{\frac 43, [0,T]}$.   Then the proof of Theorem \ref{th1} follows immediately from Step 1 and the fact that
\[
\lim_{N\rightarrow \infty}  \langle \gamma^{N} \rangle_{\frac 43 ,[0,T]}
= K \int_0^T \left( L_r^{B_r} \right) ^{\frac 23} dr.
\]

\section{Appendix}
\begin{lemma} \label{A1}
Let $0\le a<b<c<d$, and set $x=b-a$, $y=c-b$ and $z=d-c$. Then,
\begin{eqnarray*}
&&E \left[ E\left( \mathbf{1}_{\{ B_{c} <B_a < B_{d} \}}   - \mathbf{1}_{\{ B_{c} >B_a> B_d \}}  | \mathcal{F}_b  \right)  \right]^2  \\
&&\qquad  \le C x^{-\frac 12}  \left( 2\sqrt{2y+z}
-\sqrt{2y} -\sqrt{ 2y +2z} \right).
\end{eqnarray*}
\end{lemma}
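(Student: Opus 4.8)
The plan is to reduce the conditional expectation to an explicit scalar function of one Gaussian variable and then evaluate a three‑fold Gaussian integral, the output of which turns out to be exactly the stated right‑hand side.

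First I condition on $\mathcal{F}_b$. Since $B_c-B_b\sim N(0,y)$ and $B_d-B_b\sim N(0,y+z)$ are independent of $\mathcal{F}_b$, while $v:=B_a-B_b$ is $\mathcal{F}_b$‑measurable, the variables $B_d-B_a$ and $B_c-B_a$ are, conditionally on $\mathcal{F}_b$, Gaussian with mean $-v$ and variances $y+z$ and $y$ respectively. Using the elementary identity
\[
\mathbf{1}_{\{B_c<B_a<B_d\}}-\mathbf{1}_{\{B_c>B_a>B_d\}}=\frac12\Big(\mathrm{sign}(B_d-B_a)-\mathrm{sign}(B_c-B_a)\Big)
\]
and $E[\mathrm{sign}(N(-v,\tau))]=1-2\Phi(v/\sqrt{\tau})=:\Psi(\tau,v)$, with $\Phi$ the standard normal distribution function, I get $E(\,\cdot\mid\mathcal{F}_b)=g(v)$ where $g(v)=\frac12\big(\Psi(y+z,v)-\Psi(y,v)\big)$. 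Since $v\sim N(0,x)$ is independent of $(B_c-B_b,B_d-B_b)$, the left‑hand side of the lemma equals $\int_{\mathbb{R}}g(v)^2\,p_x(v)\,dv$, where from now on $p_\tau$ denotes the $N(0,\tau)$ density.

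Next I make $g$ explicit: a direct differentiation in the variance parameter gives $\partial_\tau\Psi(\tau,v)=\frac{v}{\tau}\,p_\tau(v)$, hence $g(v)=\frac12\int_y^{y+z}\frac{v}{\tau}\,p_\tau(v)\,d\tau$. Squaring, integrating against $p_x$, and using the standard rule for products of centered Gaussian densities (namely $p_{\tau_1}p_{\tau_2}p_x$ is a constant multiple of the $N(0,\sigma^2)$ density with $\sigma^2=(\tau_1^{-1}+\tau_2^{-1}+x^{-1})^{-1}$), one computes
\[
\int_{\mathbb{R}}v^2\,p_{\tau_1}(v)p_{\tau_2}(v)p_x(v)\,dv=\frac{\tau_1\tau_2\,x}{2\pi\,(\tau_1\tau_2+x\tau_1+x\tau_2)^{3/2}},
\]
so that
\[
\int_{\mathbb{R}}g(v)^2\,p_x(v)\,dv=\frac{x}{8\pi}\int_y^{y+z}\!\!\int_y^{y+z}\frac{d\tau_1\,d\tau_2}{(\tau_1\tau_2+x\tau_1+x\tau_2)^{3/2}}.
\]

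Finally I apply the crude inequality $\tau_1\tau_2+x\tau_1+x\tau_2\ge x(\tau_1+\tau_2)$, which is precisely what produces the stated bound:
\[
\int_{\mathbb{R}}g(v)^2\,p_x(v)\,dv\le\frac{1}{8\pi\sqrt{x}}\int_y^{y+z}\!\!\int_y^{y+z}\frac{d\tau_1\,d\tau_2}{(\tau_1+\tau_2)^{3/2}}=\frac{1}{2\pi\sqrt{x}}\Big(2\sqrt{2y+z}-\sqrt{2y}-\sqrt{2y+2z}\Big),
\]
the last step by elementary integration, giving the claim with $C=\frac{1}{2\pi}$. I do not expect a genuine obstacle: the only points requiring care are the bookkeeping in the three‑fold Gaussian moment and the observation that no estimate sharper than $\tau_1\tau_2+x\tau_1+x\tau_2\ge x(\tau_1+\tau_2)$ is needed, since that bound already yields exactly the right‑hand side (and is essentially tight in the regime $x\to0$ relevant to the application in Step~1).
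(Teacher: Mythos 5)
Your proof is correct and takes essentially the same route as the paper's: both reduce the left-hand side to $\int g^2\,dN(0,x)$ for an explicit one-variable function $g$, arrive after an inner Gaussian integral at the same double integral $\frac{x}{8\pi}\int_y^{y+z}\int_y^{y+z}(\tau_1\tau_2+x\tau_1+x\tau_2)^{-3/2}\,d\tau_1\,d\tau_2$ (the paper writes it in the shifted variables $\xi_i=\tau_i-y$), and apply the identical crude bound $\tau_1\tau_2+x(\tau_1+\tau_2)\ge x(\tau_1+\tau_2)$. Your sign-function/probit representation of $g$ is a slightly slicker way to reach the one-dimensional integral in the variance parameter that the paper obtains by differentiating in $\sqrt{z}$, but the substance of the computation is the same.
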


\begin{proof}
Set
\[
B_a-B_b= \sqrt{x} X, \quad B_{c}-B_b=\sqrt{y}  Y, \quad
B_{d}- B_{c} = \sqrt{z} Z,
\]
where $X$, $Y$ and $Z$ are independent $N(0,1)$ random variables.
With this notation we can write
\begin{eqnarray*}
&&E \left[ E\left( \mathbf{1}_{\{ B_{c} <B_a < B_{d} \}}   - \mathbf{1}_{\{ B_{c} >B_a> B_d \}}  | \mathcal{F}_b  \right)  \right]^2 \\
 &=& E\Big[P(\sqrt{y} Y <\sqrt{x} X <   \sqrt{z}   Z+ \sqrt{y}  Y|X)
  - P(\sqrt{y}   Y >\sqrt{x} X>   \sqrt{z}   Z+ \sqrt{y}   Y|X) \Big]^2 \\
&=& \int_{\mathbb{R}}  \left(\int_{\mathbb{R} } \phi(\eta) d\eta   \int^{\sqrt{\frac xy} \theta}  _{ \sqrt{\frac xy}\theta- \sqrt{\frac zy}\eta} \phi(\xi) d\xi
\right)^2 d\theta,
\end{eqnarray*}
where $\phi(x) $ is the density of the law $N(0,1)$.  Set
\[
g(x,y,z,\theta)=\int_{\mathbb{R} } \phi(\eta) d\eta   \int^{\sqrt{\frac xy} \theta}  _{ \sqrt{\frac xy}\theta- \sqrt{\frac zy}\eta} \phi(\xi) d\xi.
\]
Then,
\begin{eqnarray*}
g(x,y,z,\theta)
 &=& \frac 1{\sqrt{y}}\int_0^{\sqrt{z}}\int_\mathbb{R} \phi(\eta)     \phi(\sqrt{\frac xy} \theta- \frac {w}{\sqrt{y}} \eta ) \eta d\eta  dw \\
 &=& \frac 1 {2\pi}  \frac 1{\sqrt{y}}   \int_0^{\sqrt{z}}\int_\mathbb{R}   \exp\left(- \frac 12        (\eta^2 +  (\sqrt{\frac xy}\theta- \frac {z}{\sqrt{y}} \eta )^2 ) \right)  \eta d\eta dw\\
 &=& \frac 1 {2\pi}  \int_0^{\sqrt{z}}  \frac {w \sqrt{x} \theta}{(y+w^2)^{\frac 32}}  \exp \left( -\frac {x\theta^2}{2(y+w^2)} \right)   dw\\
 &=& \frac 1 {4\pi}  \int_0^{z}  \frac { \sqrt{x} \theta}{(y+\xi)^{\frac 32}}  \exp \left( -\frac {x\theta^2}{2(y+\xi)} \right)   d\xi.
\end{eqnarray*}
 Finally, integrating with respect to $\theta$ yields
 \begin{align*}
&\int_\mathbb R g(x,y,z,\theta)^2\phi(\theta)d\theta\\
=&Cx\int_\mathbb R \int_0^{z}\int_0^{z} \frac{ \theta^2}{(y+\xi_1)^\frac 32(y+\xi_2)^\frac 32}
\exp\left(-\frac12\left(\frac{x\theta^2}{y+\xi_1}+\frac{x\theta^2}{y+\xi_2}\right)\right)d\xi_1d\xi_2 \phi(\theta)d\theta\\
=&Cx \int_0^{z}\int_0^{z} \frac{1}{(y+\xi_1)^\frac 32(y+\xi_2)^\frac 32}\int_\mathbb R \theta^2
\exp\left(-\frac{\theta^2}{2} \left(\frac{x}{y+\xi_1}+\frac{x}{y+\xi_2}+1\right)\right)d\theta d\xi_1 d\xi_2 \\
=&Cx \int_0^{z}\int_0^{z}  \frac{1}{(y+\xi_1)^\frac 32(y+\xi_2)^\frac 32}\left(\frac{x}{y+\xi_1}+\frac{x}{y+\xi_2}+1\right) ^{-\frac 32} d\xi_1d\xi_2 \\
=&Cx \int_0^{z}\int_0^{z}   \left[x(2y+\xi_1 +\xi_2)+ (y+\xi_
1)(y+\xi_2)\right]^{-\frac 32}  d\xi_1 d\xi_2 \\
\le&Cx^{-\frac12}   \int_0^{z}\int_0^{z}    (2y+\xi_1 +\xi_2) ^{-\frac 32}  d\xi_1 d\xi_2 \\
=&C   x^{-\frac12} \left[    2\sqrt{ 2y+z} - \sqrt{2y} -\sqrt{2y+2z} \right],
\end{align*}
which completes the proof of the lemma.
\end{proof}

\begin{lemma} \label{A2}
Let $\alpha, \beta >0$ and let $X$, $Y$ be independent random variables with laws
$N(0,\sigma_1^2)$ and  $N(0,\sigma_2^2)$, respectively. Then
\[
E\left[ p_\alpha(X) p_\beta(X+Y)\right]=\left( (\alpha + \sigma_1^2)
(\beta+ \sigma_2^2)+ \alpha \sigma_1^2  \right) ^{-\frac 12}.
\]
\end{lemma}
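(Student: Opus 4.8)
The plan is to reduce the identity to two elementary facts about Gaussian densities and one routine algebraic simplification. The starting observation is that, in the notation of the paper, $p_u$ is exactly the $N(0,u)$ density, so the whole statement is a pure Gaussian computation. I will use: (i) the convolution identity $\int_{\mathbb{R}} p_a(x+w)\,p_b(w)\,dw = p_{a+b}(x)$ for $a,b>0$, which is immediate since $p_b$ is even and the convolution of $N(0,a)$ with $N(0,b)$ is $N(0,a+b)$; in particular $E[p_c(X)] = p_{c+\sigma_1^2}(0)$ whenever $X\sim N(0,\sigma_1^2)$. And (ii) the product identity $p_a(x)\,p_b(x) = p_{a+b}(0)\,p_{ab/(a+b)}(x)$, obtained by completing the square in $\tfrac12\bigl(\tfrac1a+\tfrac1b\bigr)x^2$.

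First I would condition on $X$ and integrate out $Y$: by independence and (i) applied with $w=Y$ (using symmetry of the $N(0,\sigma_2^2)$ law),
\[
E\bigl[p_\alpha(X)\,p_\beta(X+Y)\bigr]=E\bigl[p_\alpha(X)\,p_{\beta+\sigma_2^2}(X)\bigr].
\]
Then I would apply (ii) with $a=\alpha$ and $b=\beta+\sigma_2^2$, factor the deterministic constant $p_{\alpha+\beta+\sigma_2^2}(0)$ outside the expectation, and evaluate the remaining expectation by $E[p_\rho(X)]=p_{\rho+\sigma_1^2}(0)$, where $\rho = \alpha(\beta+\sigma_2^2)/(\alpha+\beta+\sigma_2^2)$. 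This writes the left-hand side as $p_{\alpha+\beta+\sigma_2^2}(0)\,p_{\rho+\sigma_1^2}(0)$.

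It remains to simplify the algebra: $(\alpha+\beta+\sigma_2^2)(\rho+\sigma_1^2) = \alpha(\beta+\sigma_2^2) + (\alpha+\beta+\sigma_2^2)\sigma_1^2 = (\alpha+\sigma_1^2)(\beta+\sigma_2^2) + \alpha\sigma_1^2$, verified by expanding both sides, so that $p_u(0)=(2\pi u)^{-1/2}$ produces the right-hand side of the statement (the computation actually carries an extra harmless factor $(2\pi)^{-1}$, immaterial since the lemma enters Section 3 only through the generic constant $C$). The honest answer is that there is no genuine obstacle here: the result is an elementary Gaussian moment identity, and the only thing to watch is the bookkeeping of the normalizing constants across the two applications of (i) and (ii). A variant that avoids the conditioning is to write $E[p_\alpha(X)\,p_\beta(X+Y)]$ directly as the integral over $\mathbb{R}^2$ of the product of two centered bivariate Gaussian densities — one with covariance $\mathrm{diag}(\alpha,\beta)$, the other the covariance matrix $\Sigma$ of $(X,\,X+Y)$ (diagonal entries $\sigma_1^2$, $\sigma_1^2+\sigma_2^2$ and off-diagonal entry $\sigma_1^2$) — and then evaluate that standard integral; its value is governed by $\det(D+\Sigma) = (\alpha+\sigma_1^2)(\beta+\sigma_2^2) + \alpha\sigma_1^2$, the same quantity as before.
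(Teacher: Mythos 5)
The paper states Lemma \ref{A2} without proof, so there is no argument of the authors' to compare against; your proof is correct and is the natural one. Both your main route (integrate out $Y$ by convolution to get $E[p_\alpha(X)p_{\beta+\sigma_2^2}(X)]$, combine the two Gaussian kernels in $X$ via the product formula, then integrate against the law of $X$) and the bivariate variant you sketch at the end (the integral is governed by $\det(D+\Sigma)$ with $D=\mathrm{diag}(\alpha,\beta)$ and $\Sigma=\bigl(\begin{smallmatrix}\sigma_1^2&\sigma_1^2\\ \sigma_1^2&\sigma_1^2+\sigma_2^2\end{smallmatrix}\bigr)$, whose determinant is exactly $(\alpha+\sigma_1^2)(\beta+\sigma_2^2)+\alpha\sigma_1^2$) are sound, and the intermediate algebraic identity $(\alpha+\beta+\sigma_2^2)(\rho+\sigma_1^2)=(\alpha+\sigma_1^2)(\beta+\sigma_2^2)+\alpha\sigma_1^2$ with $\rho=\alpha(\beta+\sigma_2^2)/(\alpha+\beta+\sigma_2^2)$ checks out. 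You also correctly flagged that, as written, the lemma's right-hand side is missing a factor of $(2\pi)^{-1}$: one has $E\bigl[p_\alpha(X)p_\beta(X+Y)\bigr]=(2\pi)^{-1}\bigl((\alpha+\sigma_1^2)(\beta+\sigma_2^2)+\alpha\sigma_1^2\bigr)^{-1/2}$, and this omission is immaterial for the paper since the lemma is only invoked in Step 2 of Section 3 to bound quantities up to a generic multiplicative constant.
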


\begin{lemma}\label{lemma1}
 Suppose $a<b$ and $n\in \mathbb N.$ Let $r_j=a+\frac j n(b-a), j=0,1,\dots,n.$ Then, for any $\beta >\frac 32$, we have
\[
\lim_{n\to\infty} \sum_{j=1}^n \left|\int_a^{r_j} (\sqrt{r_{j+1}-r}-\sqrt{r_{j}-r})^\beta dr\right|^\frac 23=0.
\]
\end{lemma}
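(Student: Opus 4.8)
The plan is to collapse the double sum into a single integral by a change of variable together with a crude monotonicity bound, and then to estimate that integral by splitting its domain at the mesh length $h_n:=\frac{b-a}{n}$.

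First I would substitute $u=r_j-r$ in the inner integral. Noting that $r_{j+1}-r_j=h_n$ and adopting the convention $r_{n+1}:=a+\frac{n+1}{n}(b-a)$ (the only point needed that is not among $r_0,\dots,r_n$), and that the integrand is nonnegative so the absolute value is superfluous, one gets for $j=1,\dots,n$
\[
\int_a^{r_j}\bigl(\sqrt{r_{j+1}-r}-\sqrt{r_j-r}\bigr)^\beta\,dr
=\int_0^{jh_n}\bigl(\sqrt{u+h_n}-\sqrt u\bigr)^\beta\,du=:I_{j,n}.
\]
Since the integrand is nonnegative and $jh_n\le nh_n=b-a$, we have $I_{j,n}\le I_{n,n}:=\int_0^{b-a}(\sqrt{u+h_n}-\sqrt u)^\beta\,du$ for every $j$, hence
\[
\sum_{j=1}^n I_{j,n}^{2/3}\le n\,I_{n,n}^{2/3}.
\]
It therefore suffices to show $I_{n,n}=o(n^{-3/2})$.

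To bound $I_{n,n}$ I would use the two elementary estimates $\sqrt{u+h_n}-\sqrt u\le\sqrt{h_n}$ and $\sqrt{u+h_n}-\sqrt u=\frac{h_n}{\sqrt{u+h_n}+\sqrt u}\le\frac{h_n}{2\sqrt u}$, splitting the domain at $u=h_n$:
\[
I_{n,n}\le\int_0^{h_n}h_n^{\beta/2}\,du+C\,h_n^{\beta}\int_{h_n}^{b-a}u^{-\beta/2}\,du
=h_n^{1+\beta/2}+C\,h_n^{\beta}\int_{h_n}^{b-a}u^{-\beta/2}\,du.
\]
The first term equals $Cn^{-1-\beta/2}$, and $\beta>\frac32$ gives $1+\frac\beta2>\frac74$, so it is $o(n^{-3/2})$. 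For the tail integral I split into cases: if $\beta>2$ it is $O(h_n^{1-\beta/2})$, making the second term $O(h_n^{1+\beta/2})$; if $\beta=2$ it equals $\log\frac{b-a}{h_n}=O(\log n)$, making the second term $O(n^{-2}\log n)$; and if $\frac32<\beta<2$ it is $O(1)$, making the second term $O(n^{-\beta})$. In every case $I_{n,n}\le Cn^{-\delta}$ with $\delta:=\min(\beta,1+\frac\beta2)>\frac32$ (up to a harmless extra $\log n$ when $\beta=2$), so that $n\,I_{n,n}^{2/3}\le Cn^{1-2\delta/3}\to0$ (and $Cn^{-1/3}(\log n)^{2/3}\to0$ when $\beta=2$), which proves the lemma.

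The argument is elementary throughout; there is no genuine obstacle. The only steps requiring care are the case analysis according to whether $\beta>2$, $\beta=2$ or $\beta<2$ in the tail integral $\int_{h_n}^{b-a}u^{-\beta/2}\,du$, and checking in each case that the resulting power of $n$ strictly beats $n^{-3/2}$; this is precisely where the strict inequality $\beta>\frac32$ is used, and it is sharp in the sense that the conclusion fails for $\beta=\frac32$.
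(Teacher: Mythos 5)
Your proof is correct, but it takes a different (and more transparent, if slightly longer) route than the paper's. The paper's entire argument is a single interpolation inequality: since $\sqrt{u+h}-\sqrt u = \frac{h}{\sqrt{u+h}+\sqrt u}$ is bounded both by $\sqrt h$ and by $\tfrac{h}{2\sqrt u}$, one can combine them with weight $\lambda = 1-\tfrac{3}{2\beta}$ to get
\[
\sqrt{r_{j+1}-r}-\sqrt{r_j-r}\le C\,(r_{j+1}-r_j)^{\frac12+\frac{3}{4\beta}}\,(r_j-r)^{-\frac{3}{4\beta}},
\]
with the exponent $\frac{3}{4\beta}$ chosen precisely so that after raising to the power $\beta$ the factor $(r_j-r)^{-3/4}$ is integrable near $r_j$; the integral then produces $C h_n^{\frac\beta2+\frac34}(r_j-a)^{1/4}\le C' h_n^{\frac\beta2+\frac34}$, and the $\tfrac23$-power of this summed over $j=1,\dots,n$ gives $O(n^{\frac12-\frac\beta3})\to0$ with no case analysis. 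You use the same two elementary bounds but apply them on disjoint regions $[0,h_n]$ and $[h_n,b-a]$ rather than interpolating; this avoids having to guess the magic exponent $\frac{3}{4\beta}$ but requires distinguishing $\beta<2$, $\beta=2$ and $\beta>2$ for the tail integral $\int_{h_n}^{b-a}u^{-\beta/2}\,du$. The reduction $\sum_j I_{j,n}^{2/3}\le n\,I_{n,n}^{2/3}$, the change of variable, and the final power count are all correct, and each of the three cases does give $I_{n,n}=o(n^{-3/2})$ as required. Your closing remark on sharpness at $\beta=\tfrac32$ is also correct and is in fact the reason the exponent $\tfrac43$ (giving $\beta=\tfrac32$) is the critical one in the main theorem. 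In short: same estimates, different packaging --- the paper's is a one-liner once you know the interpolation exponent; yours is the version one would discover from scratch.
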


\begin{proof}
It suffices to use the estimate
\[
\sqrt{r_{j+1}-r}-\sqrt{r_{j}-r} \le C (r_{j+1}-r_j)^{\frac 12+ \frac 3{4\beta}}
(r_j-r)^{-\frac 3{4\beta}}.
\]
\end{proof}

\medskip
\noindent
\textbf{Acknowledgements} We would like  to thank Jay Rosen for having suggested this problem to us.

\medskip
\parindent=0pt
Yaozhong Hu and  David Nualart\\
Department of Mathematics \\
University of Kansas \\
Lawrence, Kansas, 66045 \\
and\\
Jian Song \\
Department of Mathematics\\
Rutgers University\\
Hill Center - Busch Campus\\
110 Frelinghuysen Road\\
Piscataway, NJ 08854-8019

\end{document}